\newcommand{\abar}{{\ensuremath{\bar{a}}}}
\newcommand{\bbar}{{\ensuremath{\bar{b}}}}
\DeclareMathOperator{\Th}{Th}  
\DeclareMathOperator{\rk}{rk}
\DeclareMathOperator{\td}{td}  
\newcommand{\alg}{\ensuremath{\mathrm{alg}}} 
\DeclareMathOperator{\ldim}{ldim}  
\DeclareMathOperator{\mrk}{mrk}  
\DeclareMathOperator{\Mat}{Mat}  
\newcommand{\Z}{\ensuremath{\mathbb{Z}}}
\newcommand{\Q}{\ensuremath{\mathbb{Q}}}
\newcommand{\C}{\ensuremath{\mathbb{C}}}
\newcommand{\Cexp}{\ensuremath{\mathbb{C}_{\mathrm{exp}}}}
\newcommand{\ga}{\ensuremath{\mathbb{G}_\mathrm{a}}}   
\newcommand{\gm}{\ensuremath{\mathbb{G}_\mathrm{m}}}  
\renewcommand{\phi}{\varphi}
\renewcommand{\ge}{\ensuremath{\geqslant}}
\newcommand{\tuple}[1]{\ensuremath{\langle #1 \rangle}}
\newcommand{\class}[2]{\ensuremath{\left\{ #1 \,\left|\, #2 \right.\right\}}}
\newcommand{\subs}{\subseteq} 
\newcommand{\elsubs}{\preccurlyeq} 
\newcommand{\minus}{\ensuremath{\smallsetminus}}
\newcommand{\nstrong}{\ensuremath{\not\kern-4pt\lhd\;}} 
\newcommand{\gen}[1]{\ensuremath{\left\langle #1 \right\rangle}} 
\newcommand{\cross}{\ensuremath{\times}}
\newbox\noforkbox \newdimen\forklinewidth
\noforkbox\hbox{\lower 2pt\box1\lower
2pt\box0\relax}
\def\unionstick{\mathop{\copy\noforkbox}\limits}
\def\nonfork_#1{\unionstick_{\textstyle #1}}
\newbox\doesforkbox
\doesforkbox\hbox{\lower 2pt\box1 \lower
2pt\box2\lower2pt\box0\relax}
\def\nunionstick{\mathop{\copy\doesforkbox}\limits}
\def\fork_#1{\nunionstick_{\textstyle #1}}
\newcommand{\findep}[4]{\ensuremath{#1 \nonfork_{#3}^{#4} #2}}
\newcommand{\algindep}[3]{\findep{#1}{#2}{#3}{\mathrm{ACF}}}
\newcommand{\leteq}{\mathrel{\mathop:}=}
\theoremstyle{plain}
\newtheorem{theorem}{Theorem}
\newtheorem{lemma}[theorem]{Lemma}
\newtheorem{prop}[theorem]{Proposition}
\newtheorem{cor}[theorem]{Corollary}
\theoremstyle{definition}
\newtheorem{defn}[theorem]{Definition}
\theoremstyle{remark}
\providecommand{\Cexp}{\mathbb{C}_{\exp}}
\providecommand{\C}{\mathbb{C}}
\providecommand{\Q}{\mathbb{Q}}
\providecommand{\Z}{\mathbb{Z}}
\newcommand{\CZ}{\C_\Z}
\newcommand{\B}{\ensuremath{\mathbb{B}}}
\newcommand{\ECF}{\ensuremath{\mathbf{ECF}}} 
\newcommand{\sstrong}{\ensuremath{\mathrel{\prec \kern -0.53ex \shortmid}}}
\newcommand{\good}{good} 
\title[$\Q$ is not $\forall$-definable in $\B$]{The rational field is not universally definable in pseudo-exponentiation}
\author{Jonathan Kirby}
\date{\today \\ \indent Supported by EPSRC grant EP/L006375/1}
\subjclass[2010]{03C10, 03C60}
\address{School of Mathematics\\
University of East Anglia\\
Norwich Research Park\\
Norwich\\
NR4~7TJ\\
UK}
\email{jonathan.kirby@uea.ac.uk}
\begin{document}

\begin{abstract}
We show that the field of rational numbers is not definable by a universal formula in Zilber's pseudo-exponential field.
\end{abstract}

\maketitle

Boris Zilber's pseudo-exponential field $\tuple{\B,+,\cdot,-,0,1,\exp}$ is conjecturally isomorphic to the complex exponential field $\Cexp = \tuple{\C;+,\cdot,-,0,1,\exp}$ \cite{Zilber05peACF0}. While $\Cexp$ is defined analytically, $\B$ is constructed entirely by algebraic and model-theoretic methods, and for example it does not have a canonical topology. The conjecture that they are isomorphic contains Schanuel's conjecture of transcendental number theory, so seems out of reach of current methods. However, it is interesting to ask what properties known to hold of one of the structures can be proved to hold of the other, and often this sheds new light on both structures.

A structure $M$ is \emph{model complete} if and only if every definable subset of $M^n$ is definable by an existential formula. Equivalently, every definable subset is defined by a universal formula, or equivalently again, whenever $M_1$ and $M_2$ are both elementarily equivalent to $M$ and $M_1 \subs M_2$, then $M_1 \elsubs M_2$.

The rational field $\Q$ is definable both in $\Cexp$ and in $\B$ by the existential formula
\[\exists y_1 \exists y_2 [e^{y_1} = 1 \wedge e^{y_2} = 1 \wedge x \cdot y_1 = y_2 \wedge y_1 \neq 0]\]
which states that $x$ is a ratio of kernel elements. (As usual, we write $e^a$ to mean $\exp(a)$.) We write $Q(M)$ for the subset of a model $M$ defined by this formula. We also write $\ker(M)$ for the subset defined by $e^x = 1$, and $Z(M)$ for the subset defined by
$\forall y[e^y = 1 \to e^{xy} = 1]$. We have $Z(\Cexp) = Z(\B) = \Z$, the standard integers, and $\ker(\B) = \tau Z(\B)$ for a transcendental number $\tau$ (corresponding to $2\pi i$ in $\C$). Laczkovich showed that $\Z$ is also definable by an existential formula in $\Cexp$ \cite{Laczkovich03}, and the same formula works in $\B$ \cite{KMO12}, so $\Z$ is not a counterexample to model completeness.

Marker \cite{Marker06} gave a topological proof that $\Q$ is not definable by a universal formula in $\Cexp$, thus proving that $\Cexp$ is not model complete. Macintyre asked whether $\B$ is model complete \cite{Macintyre_Antalya_talk}. I answered this negatively \cite{FPEF} by finding a pair of models $M_1 \subs M_2$ of the first-order theory of $\B$ with $M_1 \not\elsubs M_2$. However, that paper only dealt with models of the theory of $\B$ with standard kernel, that is, $\ker(M) = \tau \Z$,  and the definable set shown to be existentially definable but not universally definable has nothing to do with the rationals. The paper \cite{ECFCIT} extends the methods of \cite{FPEF} to the case of models of $\Th(\B)$ with non-standard kernel. Using these extended methods, this note proves:
\begin{theorem}\label{main theorem}
The rational field $\Q$ is not definable by a universal formula in $\B$.
\end{theorem}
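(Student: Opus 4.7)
\bigskip
\noindent\textbf{Proof plan.}
Suppose for contradiction that $Q(\B)$ is defined by a universal formula $\phi(x)$. Since the existential formula displayed in the introduction also defines $Q(\B)$ in $\B$, the biconditional between the two lies in $\Th(\B)$, and so $\phi$ defines $Q(M)$ in every $M \models \Th(\B)$. For any $M_1 \subs M_2 \models \Th(\B)$, upward preservation of existential formulas gives $Q(M_1) \subs Q(M_2)$, while downward preservation of universal formulas gives $Q(M_2) \cap M_1 \subs Q(M_1)$; together, $Q(M_2) \cap M_1 = Q(M_1)$. Thus it suffices to exhibit a pair $M_1 \subs M_2 \models \Th(\B)$ and an element $a \in M_1$ with $a \in Q(M_2) \setminus Q(M_1)$.

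For the construction, take $M_1 = \B$ itself, so that $\ker(M_1) = \tau\Z$ and $Q(M_1) = \Q$, and choose $a \in \B$ transcendental over $\Q$. To place $a$ into $Q(M_2)$ one needs new kernel elements $y_1, y_2 = ay_1 \in \ker(M_2)$ with $y_1 \neq 0$; because $a \notin \Q$ these cannot lie in the standard kernel $\tau\Z$, so $M_2$ must have non-standard kernel. At first sight such an extension appears forbidden by Schanuel: $y_1$ and $ay_1$ are $\Q$-linearly independent yet $\td(y_1, ay_1) \le 1$, giving a drop of $-1$ in the naive predimension. The resolution is to pass to the non-standard-kernel framework of \cite{ECFCIT}, in which the predimension is computed modulo the kernel: once $y_1$ and $y_2$ are regarded as part of the kernel of $M_2$, they contribute nothing to the predimension and the extension becomes admissible, with $y_1,y_2$ appearing as $\tau$ times a pair of non-standard integers in $Z(M_2)$ whose ratio is $a$.

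Within that framework, I would realise the partial exponential field $F$ that adjoins to $M_1$ a new element $y_1$ algebraically independent from $M_1$ over $\Q$, together with $y_2 = ay_1$, and extends the exponential by $\exp(y_1) = \exp(y_2) = 1$, as a strong extension $M_1 \subs F$ in the sense relevant for non-standard-kernel models. I would then use the amalgamation, exponential-algebraic closure and saturation arguments of \cite{FPEF, ECFCIT} to close $F$ up to a model $M_2 \models \Th(\B)$ containing $M_1$. By construction $a = y_2/y_1 \in Q(M_2)$ and $a \notin Q(M_1) = \Q$, contradicting the assumed universal definability of $Q$.

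The main obstacle is the last step: verifying that the prescribed kernel extension is strong in the sense required by \cite{ECFCIT}, and that the subsequent closure genuinely yields a model of the full first-order theory $\Th(\B)$, including the axiom scheme forcing $Z(M_2) \elemeq \Z$ and the appropriate non-standard-kernel analogues of Schanuel and exponential-algebraic closure. This is precisely the technical content that \cite{ECFCIT} develops in the non-standard-kernel case; the novelty of the present note is to arrange the extension so that the witness of non-model-completeness is attached to the particular existential formula defining $Q$, rather than to the unrelated set produced in \cite{FPEF}.
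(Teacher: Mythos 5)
Your high-level reduction (find $M_1 \subs M_2 \models \Th(\B)$ with some $a \in Q(M_2)\setminus Q(M_1)$) is correct, but the construction as stated has a concrete gap, and in fact the paper proceeds in the opposite direction.

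\textbf{The gap.} You propose to ``choose $a \in \B$ transcendental over $\Q$'' and then adjoin $y_1$ (algebraically independent from $\B$) and $y_2 = ay_1$ to the kernel. Mere transcendence of $a$ over $\Q$ is not enough for the Schanuel property over the enlarged kernel to hold. For example, by axiom 4 there is $a \in \B$, transcendental, with $e^a = 1 + a$ (the variety $\{(x,y) : y = 1 + x\}$ is rotund, additively and multiplicatively free, of dimension $1$). In the proposed extension the new kernel $\ker'$ contains $y_1$ and $ay_1$, hence $a = (ay_1)/y_1 \in \ker'^{\alg}$, and then $e^a = 1+a \in \ker'^{\alg}$ as well, so $\td(a, e^a / \ker') = 0$ while $\ldim_\Q(a/\ker') = 1$. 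The relative predimension is $-1$ and axiom $3'$ fails already for the singleton $\bbar = (a)$. So the extension cannot be strong (let alone a model of $\Th(\B)$) for this choice of $a$; you would at minimum need $a$ to be exponentially transcendental, and you have not said how to verify that the Schanuel condition survives for all tuples from $\B$ once $y_1, y_2$ are thrown into the kernel. This is precisely where all of the technical work in the paper lives, and deferring it wholesale to \cite{FPEF, ECFCIT} does not resolve it, since those papers do not construct the specific extension you need.

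\textbf{What the paper does instead.} The paper reverses the roles: it fixes at the outset a model $M \in \ECF$ which is saturated over its kernel with $Z(M)$ an $\aleph_0$-saturated model of $\Th(\Z)$ (so $M$ has non-standard kernel and is the \emph{large} structure), finds by $\aleph_0$-saturation of $Z(M)$ an element $q = r_1/r_2 \in Q(M)$ realising a type $p(y)$ which forces $q$ transcendental and $\Q(q)^\alg \cap Z(M) = \Z$, and then builds the \emph{small} model $F \subs M$ with standard kernel by a transfinite chain of ``good'' $\Q$-linear subspaces. The crucial advantages of this direction are: (a) the Schanuel and semistrongness computations for $F$ are all carried out \emph{inside} $M$, using $M$'s Schanuel property over its kernel, so one never has to argue that an extension \emph{could} be built; and (b) the constraint you would have needed on $a$ is automatic for $q$ --- the paper shows $\Delta(q) = 0$ directly from the fact that $q$ is already a ratio of kernel elements of $M$ (Lemma~\ref{A_0 good}). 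The condition $\Q(q)^\alg \cap Z(M) = \Z$, ensured by the type $p$, is exactly what makes the carved-out $F$ have standard kernel so that $Q(F) = \Q$ and $q \notin Q(F)$. Your direction does not need this particular condition (since you put $\B$ on the inside), but it needs a different, nontrivial condition on $a$ that you have not supplied.
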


The proof goes by constructing exponential fields $F \subs M$, both elementarily equivalent to \B, and an element $q \in F$ such that $q \in Q(M)$ but $q \notin Q(F)$. That shows that $\Q$ cannot be universally definable in \B. The construction of the element $q$ is somewhat separate from the construction of $F$ and $M$, and also demonstrates that $\Q$ is not universally definable in the structure $\C_\Z$ which consists of the complex field expanded by a predicate for the integers. This result for $\C_\Z$ can also be deduced from Marker's result above, but we give an explicit proof in section~1 below. Section~2 contains the necessary background about exponentially closed fields, and the proof of Theorem~\ref{main theorem} forms section~3.

\section{The complex field with a predicate for the integers}

Write $\CZ$ for the structure $\tuple{\C;+,\cdot,-,0,1,\Z}$, the complex field with a predicate $Z$ naming the integers. The rational field $\Q$ is defined in $\CZ$ by the formula
\begin{equation*}
\tag{$*$} \exists y_1 \exists y_2 [Z(y_1) \wedge Z(y_2) \wedge x \cdot y_1 = y_2 \wedge y_1 \neq 0]
\end{equation*}
and we write $Q(M)$ for the realisation of this formula in any model $M$ of $\Th(\CZ)$. 
\begin{prop}\label{q}
There is an elementary extension $M$ of $\CZ$ with an element $q \in Q(M)$ such that $q$ is transcendental, but $\Q(q)^\alg \cap Z(M) = \Z$.
\end{prop}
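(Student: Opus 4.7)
The plan is to realize a suitable $q$ in an elementary extension of $\CZ$ via compactness.

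First I observe that any qualifying $q$ must lie outside $\C$: the universal formula $\neg(*)$ holds at every transcendental element of $\C$ in $\CZ$ (since $Q(\CZ) = \Q$), and by elementarity this persists in $M$. Conversely, since $\C$ is algebraically closed and $\C \elsubs M$ in the field reduct, every $q \in M \setminus \C$ is transcendental over $\C$ and hence over $\Q$, so the transcendence requirement comes for free. Moreover $\Q(q)^\alg \cap \C = \Q^\alg$ (otherwise $\Q(q)^\alg \subseteq \C$, contradicting $q \notin \C$), so $\Q(q)^\alg \cap Z(M) \cap \C = \Z$; what remains is to rule out nonstandard integers of $Z(M)$ from $\Q(q)^\alg$.

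I will introduce a fresh constant $q$ and consider the theory
$$T^* = \mathrm{ElDiag}(\CZ) \cup \{\text{``}q \in Q\text{''}\} \cup \{q \neq c : c \in \C\} \cup \{\phi_p : p \in \Z[X,Y] \setminus \{0\}\},$$
where $\phi_p$ is the first-order axiom $\forall n\,[Z(n) \wedge p(q,n) = 0 \to \bigvee_{k \in K_p} n = k]$ and $K_p := \{k \in \Z : p(X,k) \equiv 0\}$ is the (finite) set of integers $k$ for which $p(X,k)$ vanishes identically in $X$. Any model $M$ of $T^*$ will yield the proposition: for $n \in Z(M) \cap \Q(q)^\alg$, clearing denominators of a witnessing algebraic equation produces a nonzero $p \in \Z[X,Y]$ with $p(q,n) = 0$, and then $\phi_p$ forces $n \in K_p \subseteq \Z$.

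Consistency of $T^*$ will follow from finite satisfiability by compactness. For a finite subtheory involving polynomials $p_1, \ldots, p_r$ and constants $c_1, \ldots, c_m \in \C$, it suffices to realize it inside $\CZ$ itself with $q \in \Q$; this requires $q \in \Q$ avoiding $\{c_1, \ldots, c_m\}$ together with the "bad set" $B := \bigcup_j \{q \in \Q : \exists n \in \Z \setminus K_{p_j},\ p_j(q,n) = 0\}$. Each constituent set is countable and, by the rational root theorem applied to $p_j(X,n) \in \Z[X]$ uniformly in $n$, its elements have denominators dividing the (nonzero) values at integers of the leading-$X$ coefficient polynomial of $p_j$. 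By choosing $q = a/P$ with $P$ a sufficiently large prime and $a$ an integer selected to avoid the finitely many residue conditions imposed by the $p_j$'s, one exhibits $q \in \Q \setminus B$.

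The hard part will be precisely this denominator argument --- confirming that the finite union $B$ does not exhaust $\Q$ for arbitrary finite collections of polynomials. The rest is straightforward, combining the preliminary elementarity observation with standard compactness.
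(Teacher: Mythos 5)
Your high-level plan matches the paper's: write down a type (you package it as a theory $T^*$) expressing that $q$ is rational, nonalgebraic, and has no algebraic dependencies with nonstandard integers, and then show finite satisfiability in $\CZ$ itself by choosing a suitable rational $q$. The reduction from a model of $T^*$ to the conclusion of the proposition is correct. The difference is entirely in how finite satisfiability is proved, and that is where your argument has a real gap.

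The paper fixes a real transcendental $y_0$ and, for each relevant polynomial $g$ (restricted to $g$ irreducible over $\Z$ with both partial derivatives nonzero), uses the complex implicit function theorem together with the discreteness of $\Z$ to produce an open neighbourhood $U_g$ of $y_0$ on which $\phi_g$ holds. Finitely many such neighbourhoods have open intersection containing infinitely many rationals, so a suitable $q \in \Q$ exists. This is short and complete. Your argument instead tries to choose $q = a/P$ with $P$ a large prime, appealing to the rational root theorem to control denominators. This is not carried out, and as stated it is not correct: the leading $X$-coefficient $c_d(Y)$ of $p_j$ may vanish at some integers $n$ (so the degree of $p_j(X,n)$ drops and the claimed divisibility does not apply), and, more seriously, when $c_d$ is nonconstant its values $c_d(n)$ can be divisible by \emph{every} sufficiently large prime $P$ (e.g.\ $c_d(Y)=Y$), so ``choose $P$ outside the set of possible denominators'' is not available. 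You are then forced into showing that, for each fixed $p_j$ and large $P$, the set of $a$ coprime to $P$ with $p_j(a/P,n)=0$ for some $n\in\Z\setminus K_{p_j}$ is finite or at least avoidable. In worked examples this set does look finite, but the general claim is a nontrivial statement about integer points on the affine curves $P^{\deg_X p_j}\,p_j(a/P,Y)=0$ and you give no argument for it — indeed you flag it yourself as ``the hard part.'' Until that claim is established (and note the paper sidesteps it entirely by working topologically and by restricting to irreducible $g$ with both partials nonzero), the consistency of $T^*$ is not proved, so the proposal as written does not constitute a proof.

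One smaller remark: you quantify $\phi_p$ over all nonzero $p \in \Z[X,Y]$ and compensate with the exception set $K_p$; the paper instead restricts to irreducible $g$ with both partials nonzero, which both eliminates the need for $K_p$ and is exactly the hypothesis needed to run the implicit function theorem. If you do want to pursue your route, restricting the class of polynomials in the same way would simplify the bookkeeping, though it would not by itself close the gap in the denominator argument.
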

From the proposition, we can deduce quickly:
\begin{cor}
The rational field $\Q$ is not universally definable in $\CZ$.
\end{cor}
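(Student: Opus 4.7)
The plan is to argue by contradiction: assume $\Q$ is defined in $\CZ$ by some universal formula $\psi(x) = \forall \bar{y}\,\phi(x,\bar{y})$ with $\phi$ quantifier-free. Let $(M,q)$ be as in Proposition~\ref{q}. Since $\CZ \models \forall x\,[(*) \leftrightarrow \psi(x)]$ and $M \elemequiv \CZ$, this biconditional holds in $M$, and $q \in Q(M)$ yields $M \models \psi(q)$.

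The key reduction is this: it would suffice to produce a $\CZ$-substructure $F$ of $M$ with $F \elemequiv \CZ$, $q \in F$, and $F \cap Z(M) = \Z$. Given such $F$, the payoff is immediate: universality of $\psi$ together with $F \subseteq M$ make $M \models \psi(q)$ descend to $F \models \psi(q)$; since $F \elemequiv \CZ$, $\psi$ defines $Q$ in $F$, forcing $q \in Q(F)$; but then $(*)(q)$ in $F$ would express $q$ as a ratio of standard integers (using $Z(F) = \Z$), contradicting the transcendence of $q$.

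To construct $F$, I would first observe that the substructure $F_0 \subseteq M$ on universe $\Q(q)^\alg$ already satisfies $F_0 \cap Z(M) = \Z$ by Proposition~\ref{q} and contains $q$. I would then enlarge $F_0$ to a $\CZ$-substructure of $M$ isomorphic to $\CZ$ by building a $\CZ$-embedding $j \colon \CZ \hookrightarrow M$ with $j(t_0) = q$ for some designated transcendental $t_0 \in \C$. This is done by transfinite recursion along a transcendence basis of $\C$ over $\Q(t_0)$: at each stage, pick an image $y_\alpha \in M$ transcendental over the current subfield whose adjunction preserves the condition that the resulting subfield intersects $Z(M)$ only in $\Z$.

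The hard part will be the inductive step --- showing that a suitable $y_\alpha$ always exists. After first replacing $M$ by a sufficiently saturated elementary extension (which retains the conditions of Proposition~\ref{q} on $q$), the relevant partial type --- ``transcendental over the current image, and every polynomial combination with earlier choices which is not a standard integer stays outside $Z(M)$'' --- is finitely consistent. This consistency is the technical heart of the argument, and is essentially an inductive version of the genericity condition $\Q(q)^\alg \cap Z(M) = \Z$ supplied by Proposition~\ref{q}, now applied to generic elements over the growing countable parameter set. Saturation then realizes the type, providing $y_\alpha$ and completing the construction.
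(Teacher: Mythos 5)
Your reduction and final contradiction are exactly right: a universal $\psi$ defining $\Q$ would descend from $M$ to any substructure $F \elemequiv \CZ$ containing $q$ with $Z(F)=\Z$, forcing the transcendental $q$ to be a ratio of standard integers. But you stop one step short of the paper's argument and then replace it with a much heavier construction. The paper simply takes $F=\Q(q)^\alg$ (your $F_0$) with the induced structure: since $Z(F)=\Z$ by Proposition~\ref{q}, already $F\models\Th(\CZ)$, because the models of this theory are exactly the algebraically closed field extensions of characteristic zero of a ring elementarily equivalent to $\Z$ (Vozoris's thesis, cited in the paper). So no enlargement of $F_0$, no saturated elementary extension, and no transfinite recursion are needed; your route of building an isomorphic copy of $\CZ$ buys independence from that axiomatization result, but elementary equivalence is all your own reduction requires.

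Moreover, as written your construction has a genuine gap at what you yourself call its technical heart. The finite consistency of the stage-$\alpha$ type is not ``an inductive version of'' Proposition~\ref{q}: the proof of Proposition~\ref{q} is a topological argument (implicit function theorem, discreteness of $\Z$ in $\C$, density of $\Q$ in $\R$) about polynomials with standard integer coefficients evaluated in the standard model, and it cannot be run when the coefficients lie in $L_\alpha\subseteq M'$, a non-standard model carrying no topology. What is needed instead is, for example, to prove in $\CZ$ the first-order schema ``for all coefficient tuples such that no $g_i(Y,w)$ vanishes identically for any $w$ with $Z(w)$, there is $y$ with $f_j(y)\neq 0$ such that no root of any $g_i(y,W)$ lies in $Z$'' (true in $\CZ$ because the excluded set of $y$ is a countable union of finite sets), transfer it to $M'$ by elementary equivalence, and check that the irreducibility and partial-derivative hypotheses on the $g_i$ over the algebraically closed field $L_\alpha$, together with $L_\alpha\cap Z(M')=\Z$, rule out factors $W-n$ with $n\in Z(M')$. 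Note also that the parameter sets are not countable: a transcendence basis of $\C$ over $\Q(t_0)$ has size $2^{\aleph_0}$, so $L_\alpha$ grows to the continuum and $M'$ must be $(2^{\aleph_0})^+$-saturated. These repairs can be made, so your plan is salvageable, but the key step does not follow from Proposition~\ref{q} as you assert, and the whole construction is unnecessary given the characterization of $\Th(\CZ)$.
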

\begin{proof}
Given $M$ and $q$ as in the proposition, let $F = \Q(q)^\alg$, considered as a substructure of $M$. Then $Z(F) = \Z$, and so $F \models \Th(\CZ)$ because models of this theory are just algebraically closed field extensions of a ring $Z$ elementarily equivalent to $\Z$ \cite{Vozoris_thesis}. But $q \in (F \cap Q(M)) \minus Q(F)$ so there is no universal formula defining $\Q$.
\end{proof}

\begin{proof}[Proof of Proposition~\ref{q}]
We consider the type $p(y)$ given by the formula $(*)$ stating that $y$ is rational together with the formulas
\[\class{f(y) \neq 0}{f \in \Z[Y] \minus \{0\}}\]
which collectively say that $y$ is transcendental and the formulas 
\[\class{\phi_g(y) }{g \in \Z[Y,W], \text{ irreducible over $\Z$, } \frac{\partial g}{\partial Y} \neq 0,  \frac{\partial g}{\partial W} \neq 0}\]
where $\phi_g(y) $ is the formula $\forall w [g(y,w) = 0 \to \neg Z(w)]$.

Then if $M$ is a model of $\Th(\CZ)$ and $M \models p(q)$, we have $q \in Q(M)$, transcendental. Furthermore if $a \in \Q(q)^\alg \cap Z(M)$ then either $a$ is algebraic, in which case $a \in \Z$ because $M \models \Th(\CZ)$, or there is $g \in \Z[Y,W]$ irreducible over $\Z$ with both partial derivatives non-zero and $g(q,a) = 0$, witnessing the algebraic dependence between $a$ and $q$. Then $a \notin Z(M)$ because $M \models \phi_g(q)$, a contradiction. So $\Q(q)^\alg \cap Z(M) = \Z$ as required.

So it is enough to show that $p(y)$ is consistent, which we do by showing any finite subtype is realised in the standard model $\CZ$. 

Fix a real transcendental number $y_0$. We claim that for each $g \in \Z[Y,W]$ which is irreducible over $\Z$ and such that $\frac{\partial g}{\partial Y}$ and $\frac{\partial g}{\partial W}$ are nonzero, there is a neighbourhood $U_g$ of $y_0$ in $\C$ such that for any $y \in U_g$, $\CZ \models \phi_g(y)$.

For such a $g$, let $h(W) = g(y_0,W)$. Then $\frac{dh}{dW} = \frac{\partial g}{\partial W}(y_0,W)$ which is nonzero because $\frac{\partial g}{\partial W}$ is nonzero and $y_0$ is transcendental, so $h(W)$ is a non-constant polynomial and hence has zeros $w_1,\ldots,w_d$ in $\C$, where $d$ is the degree of $h$. Since $g$ is irreducible over $\Z$ we have $h$ irreducible over $\Z[y_0]$ and so the $w_i$ are distinct  and it follows that $\frac{dh}{dW}(w_i) \neq 0$, that is, $\frac{\partial g}{\partial W}(y_0,w_i) \neq 0$ for each $i$.

We apply the complex implicit function theorem to the polynomial $g(Y,W)$ at each point $(y_0,w_i)$ to find a neighbourhood $U_g$ of $y_0$ in $\C$, disjoint neighbourhoods $V_i$ of $w_i$ in $\C$ and analytic functions $s_i : U_g \to V_i$ such that $s_i(y_0) = w_i$, and for all $y \in U_g$ and each $i=1,\ldots, d$ we have $g(y,s_i(y)) = 0$, and the only solution $w$ in $V_i$ to $g(y,w) = 0$ is $s_i(y)$. Since for each $y \in U_g$ the polynomial $g(y,W)$ has degree (at most) $d$ in $W$, these must be the only solutions $w$ in $\C$ to $g(y,w) = 0$.

If some $w_i$ were in $\Z$ (or even algebraic) then since $y_0$ is transcendental and $g(y_0,w_i) = 0$ we must have $g(Y,w_i) = 0$. Then $W-w_i$ would be a factor of $g(Y,W)$ so, since $g(Y,W)$ is irreducible, we get $g(Y,W) = \pm(W-w_i)$. Then $\frac{\partial g}{\partial Y}$ vanishes, a  contradiction. So no $w_i$ is in $\Z$. Since all the functions $s_i$ are continuous and $\Z$ is discrete, we can shrink $U_g$ to ensure that for all $y \in U_g$ and each $i=1,\ldots,d$ we have $s_i(y) \notin \Z$. That proves the claim.

Now let $p_0$ be a finite subtype of $p$ and let $U = \bigcap_{\phi_g \in p_0} U_g$. Then $U$ is open and contains the real point $y_0$, so $\Q \cap U$ is infinite. Choose $q \in \Q \cap U$ satisfying all the finitely many conditions $f(y) \neq 0$ from $p_0$. So $p_0$ is consistent and, by compactness, so is $p$.
\end{proof}

\section{Exponentially closed fields}

We consider structures $\tuple{M;+,\cdot,-,0,1, \exp}$ in the language  of rings expanded by a unary function symbol $\exp$, satisfying some or all of the following list of axioms, which are numbered as in \cite{ECFCIT}.
\begin{description}
 \item[1. ELA-field] $M$ is an algebraically closed field of characteristic zero, and its exponential map $\exp$ is a homomorphism from its additive group to its multiplicative group, which is surjective.
\end{description}

Any model of axiom 1 is called an \emph{ELA-field}.
\begin{description}
\item[2. Standard kernel] the kernel of the exponential map is an infinite cyclic group generated by a transcendental element $\tau$.
\end{description}

Since standard kernel is not preserved under elementary extensions, we also consider the following weaker version of axiom 2 which is.
\begin{description}
\item[2$'$] There is $\tau \in M$, transcendental over $Z(M)$, such that $\ker(M) = \class{\tau z}{z \in Z(M)}$. Furthermore, $\tuple{Z(M);+,\cdot,-,0,1}$ is a model of the full first-order theory of the ring of standard integers.
\end{description}

For the last two axioms we need some more notation and terminology. By $\td(Y/X)$ we mean the transcendence degree of the field extension $\Q(XY)/\Q(X)$ and by $\ldim_\Q(Y/X)$ we mean the dimension of the $\Q$-vector space spanned by $X \cup Y$, quotiented by the subspace spanned by $X$.
If $X$, $Y$ are subsets of the multiplicative group $\gm(M)$, we write $\mrk(Y/X)$ for the \emph{multiplicative rank}, that is, the $\Q$-linear dimension of the divisible subgroup spanned by $X\cup Y$, quotiented by the divisible subgroup spanned by $X$ and all the torsion. 

Let $V$ be a subvariety of $\ga^n(M)\cross \gm^n(M)$ and let $(\abar,\bbar)$ be a point in $V$, generic over $M$. Then $V$ is said to be \emph{additively free} if $\ldim_\Q(\abar/M) = n$, and \emph{multiplicatively free} if $\mrk(\bbar/\gm(M)) = n$. $V$ is \emph{rotund} if for every matrix $L \in \Mat_{n \cross n}(\Z)$, we have $\td(L \abar, \bbar^L/M) \ge \rk L$, where $\rk L$ means the rank of the matrix $L$, and $\bbar^L$ is just the usual matrix action as a linear map but in the multiplicative group rather than the additive group.
\begin{description}
  \item[3$'$. The Schanuel Property over the kernel] \ The predimension function 
 \[ \Delta(\bbar) \leteq \td(\bbar, \exp(\bbar) /  \ker(M)) - \ldim_\Q(\bbar /  \ker(M)) \]
 satisfies $\Delta(\bbar) \ge 0$ for all tuples $\bbar$ from $M$.

\item[4. Strong exponential-algebraic closedness] If $V$ is a rotund, additively and multiplicatively free subvariety of $\ga^n(M) \cross \gm^n(M)$ defined over $M$ and of dimension $n$, and $\abar$ is a finite tuple from $M$, then there is $\bbar \in M^n$ such that $(\bbar,e^\bbar) \in V$ and is generic in $V$ over $\abar$.
\end{description}
Axiom $3'$ puts a strong restriction on what systems of exponential polynomial equations can have solutions in $M$, based on Schanuel's conjecture. Axiom 4 is a suitable form of existential closedness, the content of which is that any system of equations which has a solution in an extension of $M$ which does not violate axiom $3'$ already has a solution in $M$.

\begin{defn}
The class \ECF\ of \emph{exponentially closed fields} is defined to be the class of models of axioms 1, $2'$, $3'$ and 4. 
\end{defn}
If the diophantine conjecture CIT is true, \ECF\ is exactly the class of all models elementarily equivalent to $\B$  \cite[Theorem~1.3]{ECFCIT}. However we do not need to rely on CIT as unconditionally all models in \ECF\ are elementarily equivalent to \B.

We will make use of a strengthening of axiom 4.
\begin{defn}
A model $M \in \ECF$ is said to be \emph{saturated over its kernel} if whenever $V$ is as in axiom~4 
and $A$ is a subset of $M$ with $|A| < |M|$, then there is $\bbar$ in $M$ such that $(\bbar,e^\bbar) \in V$ and is generic in $V$ over $A$, and also the \emph{exponential transcendence degree} of $M$ is equal to $|M|$.
\end{defn}
We will not make use of the exponential transcendence degree so we do not give the definition. The main theorem of \cite{ECFCIT} states that such models exist in large enough cardinalities, and are unique once the model of the ring of integers is specified.
\begin{theorem}[{\cite[Theorem~1.1]{ECFCIT}}]
For each $\aleph_0$-saturated model $R$ of $\Th(\Z)$, and for each cardinal $\lambda > 2^{\aleph_0}$ with $\lambda \ge |R|$, there is exactly one model $M \in \ECF$ such that $Z(M) = R$ and such that $M$ is saturated over its kernel.
\end{theorem}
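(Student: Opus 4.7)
The plan is to treat this as a combined existence-and-uniqueness theorem by a Hrushovski--Zilber style amalgamation-with-predimension construction, carried out relative to the fixed model $R$ of $\Th(\Z)$.

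For existence, I would begin with the partial exponential field $F_0 = \Q(R,\tau)^\alg$ for a new transcendental $\tau$ with declared kernel $\tau R$; this is the minimal candidate with the prescribed $Z(M)=R$ and axiom~$2'$. I would then run a transfinite induction of length $\lambda$ that alternates three kinds of extension: (i) closing under $\exp$ and $\log$ to build towards axiom~1; (ii) for each rotund, additively and multiplicatively free subvariety $V \subs \ga^n \cross \gm^n$ defined over a set of size $<\lambda$ in the current field, adjoining a tuple $\bbar$ with $(\bbar,e^\bbar)\in V$ generic over a prescribed parameter set, to approach the saturation form of axiom~4; (iii) adjoining exponentially transcendental elements so that the exponential transcendence degree reaches $\lambda$. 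Each step is performed as a free amalgamation over a strong subset (with respect to $\Delta$) so that
\[\Delta(\bbar) = \td(\bbar,\exp(\bbar)/\ker) - \ldim_\Q(\bbar/\ker)\]
remains non-negative throughout, forcing the Schanuel property~$3'$ on the direct limit. Standard bookkeeping of length $\lambda$, together with $\lambda > 2^{\aleph_0}$, is enough to treat all required varieties and exp-transcendence requirements.

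For uniqueness, I would run a back-and-forth of length $\lambda$ between two candidates $M_1$ and $M_2$ satisfying the hypotheses. The partial isomorphisms in the game are exp-field isomorphisms between strong subsets of cardinality $<\lambda$. The game is initiated with a kernel isomorphism sending a transcendence generator $\tau_1 \in \ker(M_1)$ to a generator $\tau_2 \in \ker(M_2)$, which is legal because axiom~$2'$ together with the fact that $Z(M_1) = Z(M_2) = R$ makes the kernels isomorphic as $R$-modules. To extend a partial isomorphism to cover a new element $c$, one classifies $c$ over its current strong hull as either algebraic, exponentially algebraic via some rotund free $V$, or exponentially transcendental, and matches it on the other side using axiom~4 in its saturation-over-kernel form for the first two cases, and the equality of exponential transcendence degrees for the third.

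The main obstacle, on which both halves rest, is the underlying amalgamation lemma: one must show that the category of partial ELA-fields with Schanuel has amalgamation over strong substructures, and, more delicately, that adjoining a generic point of a rotund, additively and multiplicatively free $V$ over a strong base preserves $\Delta \ge 0$. This is exactly where the definition of rotundity earns its keep, as the inequality $\td(L\bbar,\bbar^L/M) \ge \rk L$ is precisely what prevents a hidden $\Z$-linear relation on $\bbar$ from conspiring with a multiplicative relation on $e^\bbar$ to drop the transcendence degree below the linear dimension. Once this amalgamation lemma is in place, both the bookkeeping for existence and the step-extension for the back-and-forth are routine; indeed the same lemma makes the back-and-forth steps legal and simultaneously confirms that the inductive construction remains consistent with $3'$.
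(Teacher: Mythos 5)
This statement is not proved in the paper at all: it is imported verbatim as Theorem~1.1 of \cite{ECFCIT} and used as a black box, so there is no internal proof to compare yours against; a complete argument here would amount to reconstructing a large part of that paper. Your outline does follow the same broad route that the cited paper actually takes --- a Hrushovski--Zilber predimension/amalgamation construction carried out relative to the kernel for existence, and a back-and-forth between models saturated over their kernels for uniqueness --- so the strategy is right in spirit. But as written it is a plan rather than a proof.

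The concrete gaps are these. First, the whole weight rests on the amalgamation lemma and on the claim that adjoining a generic point of a rotund, additively and multiplicatively free variety over a semistrong base preserves $\Delta \ge 0$ and semistrongness over the kernel; you name these but do not prove them, and they are the mathematical substance of the theorem. Second, neither hypothesis of the statement is ever used in your argument, which is a reliable sign that something is missing: the back-and-forth is not between small substructures in the usual sense, because every partial isomorphism must be compatible with $Z(M_1)=Z(M_2)=R$, hence must respect the entire kernel $\tau R$, which may itself have cardinality $\lambda$; extending a partial map across a new element then requires realising its type over infinitely many kernel elements, including divisibility and congruence conditions in $R$, and axiom~4 (or saturation over the kernel) only supplies generic points of varieties, not realisations of such kernel-laden types --- this is exactly where the $\aleph_0$-saturation of $R$ has to enter, and your step ``matches it on the other side using axiom~4'' is too quick. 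Similarly $\lambda > 2^{\aleph_0}$ must appear somewhere in the counting (for instance to absorb the up to $2^{\aleph_0}$ isomorphism types of countable semistrong extensions of the kernel); a proof that never invokes it cannot be complete. Finally, some bookkeeping is glossed over: saturation over the kernel demands genericity over arbitrary subsets of size $<\lambda$, which one gets by taking generics over whole stages of a $\lambda$-chain (with care when $\lambda$ is singular), and ``exactly one'' means uniqueness up to isomorphism compatible with $Z(M)=R$, so your back-and-forth should specify what the resulting isomorphism does on $R$ and on a choice of kernel generator.
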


For the rest of this note we fix an $M \in \ECF$ such that $Z(M)$ is an $\aleph_0$-saturated model of $\Th(\Z)$ and $M$ is saturated over its kernel. The kernel generator described in axiom 2$'$ is defined only up to $\pm$, so we choose one of them to be $\tau$.


We need a little more notation. For subsets $A$, $B$, and $C$ of $M$ we write $\algindep{A}{B}{C}$ to mean that $A$ is independent from $B$ over $C$ in the sense of algebraically closed fields, that is, every finite tuple $\abar \in A$ satisfies $\td(\abar/B \cup C) = \td(\abar/C)$. 

By $A^\alg$, we mean the field-theoretic algebraic closure of $A$ in $M$ and we write $\gen{A}$ for the $\Q$-linear span of $A$ in $M$. 

\begin{defn}
We say that $A$ is \emph{semistrong} in $M$ and write $A \sstrong M$ if 
\begin{enumerate}[(i)]
\item for every finite tuple $\bbar$ from $M$, the relative predimension function 
\[\Delta(\bbar/A) \leteq \td(\bbar, \exp(\bbar) /  \ker(M),A,\exp(A)) - \ldim_\Q(\bbar / \ker(M),A)\]
satisfies $\Delta(\bbar/A) \ge 0$; and 
\item $\algindep{A,\exp(A)}{\ker(M)}{\ker(M)\cap \gen{A}}$.
\end{enumerate}
\end{defn}
Note that $\ldim_\Q(\bbar / \ker(M),A) = \mrk(\exp(\bbar)/\exp(A))$, so we can also write the relative predimension function as 
$\Delta(\bbar/A) = \td(\bbar, \exp(\bbar) /  \ker(M),A,\exp(A)) - \mrk(\exp(\bbar) /\exp(A))$.

If $B = \gen{A,\bbar}$ we also write $\Delta(B/A)$ for $\Delta(\bbar/A)$, and if $A = \gen{\abar}$ we write $\Delta(\bbar/\abar)$ for $\Delta(\bbar/A)$.
The addition property for $\Delta$ is easily verified: for all $A, \abar, \bbar$,
\[\Delta(\abar\bbar/A) = \Delta(\bbar/A\abar) + \Delta(\abar/A).\]

In the paper \cite{ECFCIT}, much use is made of partial exponential fields, and the semistrong property is defined for them. Here we will work inside the fixed model $M$, so it is equivalent and notationally simpler to work with $\Q$-linear subspaces.

\section{The Proof of Theorem~\ref{main theorem}}

Since $Z(M)$ is $\aleph_0$-saturated, there are $r_1,r_2 \in Z(M)$ such that, taking $q = r_1/r_2$, $q$ satisfies the type $p(y)$ from the proof of Proposition~\ref{q}. It is easy to check that $r_1$ and $r_2$ are algebraically independent over $\Q$ (that is, they do not satisfy any non-trivial polynomial equations with standard rational coefficients). Indeed, otherwise they would lie in $\Q(q)^\alg$, but then the type $p$ implies they are both standard integers, contradicting the transcendence of $q$.

So $q \in Q(M)$, but $q$ is not in the $\Q$-linear span of $Z(M)$. Let $p_1 = \tau r_1$ and $p_2 = \tau r_2$, so $q = p_1/p_2$ and $p_1,p_2 \in \ker(M)$. Then $p_1$ and $p_2$ are algebraically independent over the kernel generator $\tau$ because $\tau$ is transcendental over $Z(M)$ by axiom $2'$.

We will build $F$ as the union of a chain of $\Q$-linear subspaces of $M$. At each stage we need certain conditions to hold to ensure that we do not run into problems later. We capture these conditions in the next definition.
\begin{defn}
Let $A$ be a $\Q$-linear subspace of $M$ such that $\tau, q \in A$. Then $A$ is \emph{\good} (\emph{for the purpose of this proof}) if 
\begin{enumerate}
\item $(A \cup \exp(A))^\alg \cap \ker(M) = \tau \Z$, so in particular $A \cap \ker(M) = \tau\Z$ and $p_1,p_2 \notin A$;
\item $\gen{A,p_1,p_2} \sstrong M$; and
\item $|A| < |M|$.
\end{enumerate}
\end{defn}
Condition (2) splits into the clauses (i) and (ii) of the definition of semistrongness above. Clause (i) does not depend on $p_1$ and $p_2$ at all, but clause (ii) does, since it says (given condition (1)) that the only algebraic dependencies between $A\cup \exp(A)$ and the kernel of $M$ are witnessed by $\{\tau, p_1,p_2\}$, that is, that $\algindep{A,\exp(A)}{\ker(M)}{\{\tau,p_1,p_2\}}$.

We will start the chain with $A_0 = \gen{\tau,q}$. 
\begin{lemma}\label{A_0 good}
$A_0$ is good.
\end{lemma}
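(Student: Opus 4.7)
The plan is to verify the three defining conditions of goodness for $A_0 = \gen{\tau, q}$ in turn; condition (3) is immediate since $A_0$ is countable while $|M| > 2^{\aleph_0}$, so the content is in (1) and (2).

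For (1), the first step is to observe that $\exp$ sends $\Q\tau$ into roots of unity (since $\tau \in \ker M$) and $\Q q$ into the divisible hull of $\exp(q)$, so $(A_0 \cup \exp(A_0))^\alg = \Q(\tau, q, \exp(q))^\alg$ inside $M$. Next I would apply axiom $3'$ to $\bbar = (q)$: since $q \in \Q(Z(M)) \subseteq \Q(\ker M)$ one has $\td(q/\ker M) = 0$, while $q \notin \Q \cdot \ker M$ (because $\tau$ is transcendental over $Z(M)$), so $\ldim_\Q(q/\ker M) = 1$ and Schanuel forces $\exp(q)$ to be transcendental over $\Q(\ker M) = \Q(\tau, Z(M))$. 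Now suppose $\tau z \in \Q(\tau, q, \exp(q))^\alg$ with $z \in Z(M)$; dividing by $\tau$ puts $z$ in that field. Applying the standard field-theoretic fact ``if $\alpha$ is transcendental over $K(\beta)$ and $\beta \in K(\alpha)^\alg$ then $\beta \in K^\alg$'' first with $\alpha = \exp(q)$, $\beta = z$, $K = \Q(\tau, q)$ (using that $\exp(q)$ is transcendental over $\Q(\tau, q, z) \subseteq \Q(\ker M)$) yields $z \in \Q(\tau, q)^\alg$; applying it again with $\alpha = \tau$, $K = \Q(q)$ (using $\tau$-transcendence over $Z(M) \supseteq \Q(q, z)$) gives $z \in \Q(q)^\alg \cap Z(M)$, which by the type $p(y)$ in Proposition~\ref{q} equals $\Z$.

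For (2), with $A = \gen{A_0, p_1, p_2}$ and basis $\abar = (\tau, q, p_1, p_2)$, I would first compute $\Delta(\abar) = 0$: the elements $\tau, p_1, p_2$ lie in $\ker M$ and $q \in \Q(\ker M)$, so $\td(\abar, \exp(\abar)/\ker M) = \td(\exp(q)/\ker M) = 1$, while $\ldim_\Q(\abar/\ker M) = 1$ (only $q$ contributes, the rest being inside $\ker M$). Clause~(i) of semistrongness then follows at once from the additivity $\Delta(\abar\bbar) = \Delta(\bbar/\abar) + \Delta(\abar)$ combined with $\Delta(\abar\bbar) \ge 0$ from axiom~$3'$. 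For clause~(ii), note $\Q(A \cup \exp(A)) \subseteq \Q(\tau, r_1, r_2, \exp(q))^\alg$; over $\{\tau, p_1, p_2\} \subseteq \ker M \cap A$ the quotients $r_i = p_i/\tau$ are already rational expressions, and $\exp(q)$ is transcendental over the full $\Q(\ker M)$, so the transcendence degree of $A \cup \exp(A)$ over $\ker M \cap A$ already equals its transcendence degree over $\ker M$, yielding the required algebraic independence.

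The main obstacle is (1): one has to bootstrap the comparatively weak information encoded in $p(y)$ — namely $\Q(q)^\alg \cap Z(M) = \Z$ — into the stronger statement about the larger algebraic closure $\Q(\tau, q, \exp(q))^\alg$. The leverage comes from two transcendence facts, namely $\tau$-transcendence over $Z(M)$ from axiom~$2'$ and $\exp(q)$-transcendence over $\Q(\ker M)$ from the Schanuel axiom~$3'$. Together these let one peel off $\exp(q)$ and then $\tau$ to fall back on Proposition~\ref{q}.
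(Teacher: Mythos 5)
Your proof is correct and follows essentially the same route as the paper: Schanuel over the kernel gives $\Delta(q)=0$ and the transcendence of $e^q$ over $\Q(\ker M)$, the additivity of $\Delta$ gives clause (2)(i), the transcendence-degree count over $\{\tau,p_1,p_2\}$ gives (2)(ii), and the two-step exchange argument (peeling off $e^q$, then $\tau$) reduces condition (1) to $\Q(q)^\alg\cap Z(M)=\Z$ from the type $p$. The only cosmetic slip is writing ``$Z(M)\supseteq \Q(q,z)$'' where you mean the fraction field $\Q(Z(M))$, which does not affect the argument.
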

\begin{proof}
First we observe using the  Schanuel property over the kernel that $\Delta(q) \ge 0$, that is,
\begin{equation*}
\tag{$\dagger$}  \td(q,e^q/\ker(M)) - \ldim_\Q(q/\ker(M)) \ge 0.
\end{equation*}
Now $q$ is algebraic over $\ker(M)$ because $q=p_1/p_2$, but $q$ is not in the $\Q$-linear span of the kernel, so $(\dagger)$ reduces to $\td(e^q/\ker(M)) = 1$, and it follows that $\Delta(q) = 0$. Also $\Delta(\tau,q) = \Delta(q) = 0$, so for any $\bbar \in M$,
\[\Delta(\bbar/\tau,q) = \Delta(\bbar,\tau,q) - \Delta(\tau,q) = \Delta(\bbar,\tau,q) \ge 0\]
by the addition property for $\Delta$ and the Schanuel Property over the kernel. So clause (2)(i) holds.

We have shown that $e^q$ is transcendental over $\ker(M)$, and we have $\tau,q \in \{\tau,p_1,p_2\}^\alg \subs \ker(M)^\alg$, so it follows that 
$\quad \algindep{\{\tau,q,e^q\}}{\ker(M)}{\{\tau,p_1,p_2\}} \quad $ which is clause (2)(ii).

For clause (1), we note that $(A_0 \cup \exp(A_0))^\alg = \{\tau,q,e^q\}^\alg$. Suppose $a \in \{\tau,q,e^q\}^\alg \cap \ker(M)$. Then $a,\tau,q \in \ker(M)^\alg$ but $e^q$ is transcendental over $\ker(M)$, so in particular $e^q \notin \{a,\tau,q\}^\alg$. By the exchange property for algebraic closure, $a \in \{\tau,q\}^\alg$, so we have
\[(A_0 \cup \exp(A_0))^\alg \cap \ker(M) = \{\tau,q\}^\alg \cap \ker(M).\]

Suppose $a \in \{\tau,q\}^\alg \cap \ker(M)$ and let $x = a / \tau$, so $x \in \{\tau,q\}^\alg \cap Z(M)$. Then $x,q \in Z(M)^\alg$ but $\tau$ is transcendental over $Z(M)$ by axiom 2$'$ so, again by exchange, $x \in \Q(q)^\alg \cap Z(M)$. Then since $q$ satisfies the type $p$ we have $x \in \Z$. So (1) holds.

Finally, $|A_0| = \aleph_0 < |M|$. So clause (3) holds and thus $A_0$ is good.
\end{proof}

\begin{lemma}\label{ELA good}
If $A$ is a \good\ $\Q$-linear subspace of $M$ then there is an ELA-subfield $K$ of $M$ containing $A$ such that $|K| = |A|$ and $K$ is also \good.
\end{lemma}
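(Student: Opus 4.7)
I would build $K$ as the union of an $\omega$-chain $A = K_0 \subseteq K_1 \subseteq \cdots$ of good $\Q$-linear subspaces of $M$, each of cardinality $|A|$, where $K_{n+1}$ is obtained from $K_n$ by three closure operations: (a) taking the field-theoretic algebraic closure inside $M$; (b) adjoining $\exp(a)$ for every $a \in K_n$ (which already lies in $M$); and (c) adjoining, for each $b \in K_n^\times$, a carefully chosen preimage $a_b \in M$ with $\exp(a_b) = b$. The union $K = \bigcup_n K_n$ is then an ELA-subfield of $M$ containing $A$, and since each step multiplies cardinality by at most $\aleph_0$, $|K| = |A|$, so clause (3) of goodness is automatic.

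The content lies in showing each operation preserves clauses (1) and (2). For clause (2), the key observation is that the newly added generators $\bbar$ always satisfy $\Delta(\bbar / K_n, p_1, p_2) = 0$, so by the addition formula for $\Delta$ and the hypothesis $\gen{K_n, p_1, p_2} \sstrong M$, the enlarged space $\gen{K_{n+1}, p_1, p_2}$ remains semistrong in $M$. For step (a), if $\bbar \in K_n^\alg$ is $\Q$-linearly independent modulo $\ker(M), K_n, p_1, p_2$, then $\ldim_\Q(\bbar / \ker(M), K_n, p_1, p_2) = |\bbar|$ while $\td(\exp(\bbar) / \ker(M), K_n, \exp(K_n), p_1, p_2) \le |\bbar|$, and the Schanuel property forces equality. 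Step (b) is vacuous, since $\exp(K_n)$ already appears on the right of $\Delta$. For step (c), two preimages of $b$ differ by a kernel element, so either every preimage is algebraic over $\ker(M) \cup K_n \cup \exp(K_n) \cup \{\tau,p_1,p_2\}$ (in which case $\Delta(a_b / K_n, p_1, p_2) = 0$ by semistrongness), or none is, in which case I use that $M$ is saturated over its kernel to realise the partial type asserting that $y$ is a preimage of $b$ transcendental over the data, again giving $\Delta = 0$.

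Clause (1) is the more delicate requirement. For step (a), the hypothesis $(K_n \cup \exp(K_n))^\alg \cap \ker(M) = \tau \Z$ immediately yields $K_n^\alg \cap \ker(M) = \tau \Z$, and the $\Delta = 0$ computation above shows the new exponentials are maximally transcendent, so no new kernel element enters $(K_n^\alg \cup \exp(K_n^\alg))^\alg$. The main obstacle is step (c): when choosing $a_b$ I must simultaneously exclude the possibility that some kernel element in $\tau Z(M) \setminus \tau \Z$ enters $(K_{n+1} \cup \exp(K_{n+1}))^\alg$. This amounts to augmenting the type for $a_b$ with countably many further non-algebraicity conditions, one for each potential bad kernel witness; consistency holds because any violation would contradict either clause (2)(ii) for $K_n$ or the Schanuel property of $M$, and the augmented type is then realised by saturation over the kernel. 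Careful bookkeeping through the $\omega$-iteration then gives the desired good ELA-subfield $K$.
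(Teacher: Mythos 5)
Your overall plan---building $K$ as the union of a chain of \good\ subspaces closed under field-theoretic algebraicity, exponentials and logarithms, and preserving goodness by showing each new generator has relative predimension $0$ and invoking the addition formula---is the same as the paper's, and your steps (a) and (b) match the paper's treatment. The gap is in step (c). You assert that the preimage $a_b$ must be chosen carefully, and you propose to realise a partial type with countably many non-algebraicity conditions by appealing to the fact that $M$ is saturated over its kernel. That appeal is not available: ``saturated over its kernel'' is not ordinary model-theoretic saturation, it only provides generic realisations of \emph{rotund, additively and multiplicatively free} subvarieties of $\ga^n \cross \gm^n$ over small sets. The locus of ``$y$ is a logarithm of the fixed element $b \in K_n$'' (the fibre $\exp(y)=b$) is not multiplicatively free, since the generic point has multiplicative coordinate $b \in M$, so $\mrk(b/\gm(M))=0 \neq 1$; hence neither axiom 4 nor kernel-saturation applies, and no other saturation of $M$ is assumed. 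Your case split also leaves clause (1) untreated exactly in the case where some preimage is algebraic over the data, which is where the danger of a bad kernel witness would arise.

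The observation that removes the difficulty---and this is how the paper argues---is that the predimension is computed over the \emph{whole} kernel $\ker(M)$, so no choice is needed at all: any logarithm works. If $a \notin \exp(A_1)$ and $e^c=a$, then semistrongness of $\gen{A_1,p_1,p_2}$ in $M$ gives
\[
\td(c/\ker(M),A_1,\exp(A_1)) = \td(c,e^c/\ker(M),A_1,\exp(A_1)) \ge \ldim_\Q(c/\ker(M),A_1) = \mrk(a/\exp(A_1)) = 1 ,
\]
so \emph{every} preimage $c$ is transcendental over $\ker(M)\cup A_1\cup\exp(A_1)$ (two preimages differ by a kernel element, and the kernel sits in the base of $\Delta$, so they all behave identically). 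In particular, if some $k \in \ker(M)$ lay in $(\gen{A_1,c}\cup\exp(\gen{A_1,c}))^\alg \subs ((A_1\cup\exp(A_1))^\alg\cup\{c\})^\alg$ but not in $(A_1\cup\exp(A_1))^\alg$, exchange would put $c$ in $(\ker(M)\cup A_1\cup\exp(A_1))^\alg$, a contradiction; so clause (1) is preserved automatically, and $\Delta(c/A_1,p_1,p_2)=0$ gives clause (2) by the addition formula just as in your other steps. So your extra type-realisation machinery is both unjustified and unnecessary; replacing it by this computation yields essentially the paper's proof.
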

\begin{proof}
The union of a chain  of length $ < |M|$ of \good\ subspaces of $M$ is still \good\ because conditions (1) and (2) have finite character, so it is enough to show that, given $a \in (A\cup\exp(A))^\alg$, nonzero, there is a \good\ subspace $A_2$ of $M$ containing $A$ such that $a \in A_2 \cap \exp(A_2)$.

First let $A_1 = \gen{A,a}$. Since $\gen{A,p_1,p_2} \sstrong M$ we have

\[\td(e^a /\ker(M),A,\exp(A)) = \td(a,e^a/\ker(M),A,\exp(A)) \ge \ldim_\Q(a/\ker(M),A).\]

Since $a \in (A\cup\exp(A))^\alg$ and $(A\cup\exp(A))^\alg \cap \ker(M) = \tau \Z \subs A$ it follows that $\ldim_\Q(a/\ker(M),A) = \ldim_\Q(a/A)$. So $\td(e^a/\ker(M),A,\exp(A)) \ge \ldim_\Q(a/A)$. If $a \in A$ we have $A_1 = A$ and we are done. Otherwise $\ldim_\Q(a/A) = 1$ so $\td(e^a/\ker(M),A,\exp(A)) = 1$, and $\Delta(a/A) = \Delta(a/A,p_1,p_2) = 1-1 = 0$.

Thus $(A_1\cup\exp(A_1))^\alg \cap \ker(M) = (A \cup \exp(A))^\alg \cap \ker(M) = \tau\Z$.  
If $\bbar$ is a tuple from $M$ then 
\begin{eqnarray*}
\Delta(\bbar/A_1,p_1,p_2) &=& \Delta(\bbar/A,a,p_1,p_2)\\
&=& \Delta(a\bbar/A,p_1,p_2) - \Delta(a/A,p_1,p_2) \\
& =& \Delta(a\bbar/A,p_1,p_2) - 0\\
&\ge& 0
\end{eqnarray*}
because $\gen{A,p_1,p_2} \sstrong M$. Thus $\gen{A_1,p_1,p_2}_M \sstrong M$. Clearly $|A_1| = |A| < |M|$, so $A_1$ is good.

If $a \in \exp(A_1)$ then set $A_2 = A_1$ and we are done. Otherwise, choose any $c \in M$ such that $e^c = a$ and set $A_2 = \gen{A_1,c}$. Then we use the same argument as above, with $A_1$ in place of $A$ and swapping the roles of the additive and multiplicative sides, to show that $A_2$ is good. In detail,
\begin{eqnarray*}
\td(c/\ker(M),A_1,\exp(A_1)) & = & \td(c,e^c/\ker(M),A_1,\exp(A_1))\\
& \ge & \ldim_\Q(c/\ker(M),A_1) \\
&=& \mrk(a/\exp(A_1)) \\
&=& 1
\end{eqnarray*}
so $c$ is transcendental over $\ker(M) \cup A_1 \cup \exp(A_1)$, and $\Delta(A_2,p_1,p_2/A_1,p_1,p_2) = 0$ so, by the same argument as above, $\gen{A_2,p_1,p_2} \sstrong M$. Hence $A_2$ is good.
\end{proof}

\begin{lemma}\label{V good}
Suppose $K$ is a \good\ ELA-subfield of $M$ and $V$ is a rotund, additively and multiplicatively free subvariety of $\ga^n(M) \cross \gm^n(M)$, defined over $K$ and of dimension $n$. Then there is a \good\ ELA-extension field $K_V$ of $K$ inside $M$ such that there is $(\abar,e^\abar) \in V(K_V)$, generic in $V$ over $K$, and $|K_V| = |K|$.
\end{lemma}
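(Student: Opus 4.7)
The plan has three steps: use the saturation of $M$ over its kernel to realise a generic point of $V$ in $M$, verify that the enlarged $\Q$-linear subspace is good, and then invoke Lemma~\ref{ELA good} to pass to its ELA-hull.

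For the realisation: since $|K| + \aleph_0 < |M|$ and $V$ is exactly a variety of the type appearing in axiom~4 over $M$, the saturation of $M$ over its kernel produces $\bbar \in M^n$ such that $(\bbar,e^\bbar) \in V$ and is generic in $V$ over $K \cup \{p_1,p_2\}$. Set $A_V := \gen{K,\bbar}$; then trivially $\tau,q \in K \subseteq A_V$ and $|A_V| = |K| < |M|$.

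For the goodness of $A_V$, the key calculation is $\Delta(\bbar/\gen{K,p_1,p_2}) = 0$: additive freeness and genericity give $\td(\bbar,e^\bbar/K,p_1,p_2) = n = \ldim_\Q(\bbar/\ker(M),K,p_1,p_2)$, which combined with $\exp(K) \subseteq K$ (as $K$ is ELA) forces $\Delta(\bbar/\gen{K,p_1,p_2}) \le 0$, and the matching lower bound comes from semistrongness of $\gen{K,p_1,p_2}$ in $M$. From this I read off $\td(\bbar,e^\bbar/K) = n = \td(\bbar,e^\bbar/K,\ker(M))$. Semistrongness clause~(i) for $\gen{A_V,p_1,p_2}$ is then immediate from the additivity of $\Delta$, and clause~(ii) reduces to clause~(ii) for $\gen{K,p_1,p_2}$ by a short transcendence-degree computation, using the identity $\ker(M) \cap \gen{A_V,p_1,p_2} = \ker(M) \cap \gen{K,p_1,p_2}$ (a consequence of the $\Q$-linear independence of $\bbar$ modulo $\gen{\ker(M),K,p_1,p_2}$). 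For the kernel-intersection condition, since $K$ is ELA we have $(A_V\cup\exp(A_V))^\alg = (K\cup\bbar\cup e^\bbar)^\alg$; any $\alpha \in \ker(M)$ in this field satisfies $\td(\bbar,e^\bbar/K,\alpha) = n$ by the displayed equalities, so $\alpha$ is algebraically independent from $(\bbar,e^\bbar)$ over $K$, and being algebraic over $K\cup\bbar\cup e^\bbar$ it must lie in $K$, whence $\alpha \in K \cap \ker(M) = \tau\Z$ by goodness of $K$.

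Finally, applying Lemma~\ref{ELA good} to the good $\Q$-linear subspace $A_V$ produces a good ELA-subfield $K_V \supseteq A_V$ of $M$ with $|K_V| = |A_V| = |K|$; the tuple $(\bbar,e^\bbar) \in V(K_V)$ is generic in $V$ over $K\cup\{p_1,p_2\}$, hence a fortiori over $K$. The main obstacle is the kernel-intersection clause, and it is precisely what forces the genericity of $\bbar$ to be required over $K\cup\{p_1,p_2\}$ rather than merely over $K$: without $p_1,p_2$ in the base, the predimension associated with $\gen{K,p_1,p_2}$ would not control $\td(\bbar,e^\bbar/K,\ker(M))$, and the exchange argument ruling out new kernel elements would break down.
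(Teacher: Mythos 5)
There is a genuine gap at the crux of your argument. You claim that additive freeness and genericity over $K\cup\{p_1,p_2\}$ give $\ldim_\Q(\bbar/\ker(M),K,p_1,p_2)=n$, and you then use this to force $\Delta(\bbar/\gen{K,p_1,p_2})\le 0$ and to read off $\td(\bbar,e^\bbar/K,\ker(M))=n$. But genericity over $K\cup\{p_1,p_2\}$ only prevents $(\bbar,e^\bbar)$ from lying on proper subvarieties of $V$ defined over (the algebraic closure of) $K\cup\{p_1,p_2\}$. A $\Q$-linear dependence of $\bbar$ over $\gen{\ker(M),K}$ has the form $\sum m_i b_i = c + k$ with $c\in K$ and $k\in\ker(M)$, and the witnessing kernel element $k$ need not be algebraic over $K\cup\{p_1,p_2\}$: the corresponding subvariety is defined over $K\cup\{k\}$, so neither genericity nor additive freeness rules it out directly. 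Excluding exactly this scenario — a nonstandard kernel element becoming algebraic over $K\cup\bbar$ — is the content of the kernel-intersection clause you are trying to prove, so as written the step is circular: your later exchange argument for $(A_V\cup\exp(A_V))^\alg\cap\ker(M)=\tau\Z$ rests on the displayed equalities, which rest on the unproved $\ldim$ claim.

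The paper gets the needed lower bound from the multiplicative side instead, and this is not a cosmetic difference. Semistrongness of $\gen{K,p_1,p_2}$ gives $\td(\abar,e^\abar/\ker(M),K)\ge\mrk(e^\abar/K)$, and multiplicative freeness together with genericity over $K$ alone gives $\mrk(e^\abar/K)=n$: any multiplicative dependence of $e^\abar$ over $K$ is witnessed, after clearing denominators and torsion, by a constant lying in $K$ itself, because $K$ is algebraically closed (so $K^\times$ is divisible and contains all roots of unity), and then genericity over $K$ applies. The kernel lives on the additive side, which is precisely why the analogous additive computation cannot be run first; the additive statement $\ldim_\Q(\bbar/\ker(M),K)=n$ is true, but only a posteriori, once $\td(\abar,e^\abar/\ker(M),K)=n$ has been established. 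Your surrounding architecture (realise the generic point over $K\cup\{p_1,p_2\}$ by saturation over the kernel, check goodness of $\gen{K,\abar}$, then apply Lemma~\ref{ELA good}) matches the paper, and the additivity argument for clause (2)(i) is fine, so the fix is local: replace your $\ldim$ claim by the $\mrk$ computation and derive the transcendence-degree identities from it.
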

\begin{proof}
Since $M$ is saturated over its kernel there is $\abar \in M^n$ such that $(\abar,e^\abar) \in V(M)$,  generic in $V$ over $K \cup\{p_1,p_2\}$. We have $\gen{K,p_1,p_2}_M \sstrong M$, so $\td(\abar,e^\abar/\ker(M), K) \ge \mrk(e^\abar/K)$. Since $V$ is multiplicatively free and $(\abar,e^\abar)$ is generic in $V$ over $K$ we have $\mrk (e^\abar / K) = n$, and so $\td(\abar,e^\abar/\ker(M),K) = n$.

Let $H = \gen{K,\abar)}_M$ and $H' = \gen{K,\abar,p_1,p_2}_M$. Then $\algindep{H}{\ker(M)}{K}$, so $H^\alg \cap \ker(M) = k^\alg \cap \ker(M) = \tau\Z$. Also $H'^\alg \cap\ker(M) = K^\alg \cap \ker(M)$, so $\algindep{H}{K,p_1,p_2}{K}$. We also know $\algindep{K,p_1,p_2}{\ker(M)}{\{\tau, p_1,p_2\}}$, so $\algindep{H}{\ker(M)}{\{\tau, p_1,p_2\}}$. Also $\Delta(H'/K,p_1,p_2) = 0$, so $H' \sstrong M$. Thus $H$ is good. Applying Lemma~\ref{ELA good} we can take $K_V$ to be some \good\ ELA-extension of $H$ in $M$.
\end{proof}

\begin{prop}
There is $F \subs M$ containing $\tau$ and $q$ such that $F \in \ECF$ and $q \notin Q(F)$.
\end{prop}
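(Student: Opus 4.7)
The plan is to construct $F$ as the union of an increasing chain of \good\ ELA-subfields of $M$, starting from an ELA-closure of $A_0 = \gen{\tau, q}$, which is \good\ by Lemma~\ref{A_0 good} and extends to a \good\ ELA-subfield via Lemma~\ref{ELA good}. The chain will be built by a standard bookkeeping recursion that alternates two kinds of extension moves at successor stages: close the current stage under ELA operations via Lemma~\ref{ELA good}, and, for each rotund, additively and multiplicatively free subvariety $V \subs \ga^n(M) \cross \gm^n(M)$ of dimension $n$ defined over the current stage, adjoin a point $(\abar, e^{\abar}) \in V$ generic over the stage via Lemma~\ref{V good}. Each lemma preserves \good ness while keeping $|F_\alpha| < |M|$, so by pairing stages with varieties defined over them I can arrange that every rotund, free, $n$-dimensional $V$ eventually defined over $F$ is already defined over some $F_\alpha$ and receives a generic point at some later $F_\beta$.

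At limit stages I take the directed union. Clauses (1) and (3) of \good ness have finite character and so pass to unions immediately; the semistrong clause (2) needs to be shown to be preserved under directed unions of \good\ subspaces, which will be the essential preservation lemma for the whole construction. Setting $F = \bigcup_\alpha F_\alpha$, the result is an ELA-field of cardinality less than $|M|$, \good\ as a subspace of $M$, containing $\tau$ and $q$.

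Once $F$ is built, I verify $F \in \ECF$ and $q \notin Q(F)$. Axiom~1 is immediate from the chain being ELA at every stage. For axiom~$2'$, clause (1) of \good ness forces $\ker(F) = \tau \Z$ and hence $Z(F) = \Z$, which satisfies $\Th(\Z)$, so $F$ even has standard kernel. For axiom~$3'$, I extract the Schanuel property over $\ker(F) = \tau \Z$ from the semistrong condition $\gen{F, p_1, p_2} \sstrong M$: the algebraic independence clause (ii) of semistrongness ensures that algebraic dependencies of $F \cup \exp(F)$ with $\ker(M)$ are witnessed only by $\{\tau, p_1, p_2\}$, which lets me convert the Schanuel inequality relative to $\ker(M)$ into one relative to $\tau\Z$. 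Axiom~4 is guaranteed by the bookkeeping. Finally, if $q \in Q(F)$ then $q = a/b$ with $a, b \in \ker(F) = \tau \Z$, whence $q \in \Q$, contradicting the transcendence of $q$ established in the proof of Proposition~\ref{q}.

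The hard part will be the bookkeeping combined with the preservation of \good ness through directed unions; in particular, showing that the relative predimension inequality $\Delta(\bbar/A) \ge 0$ is preserved in the limit rests on the Schanuel property of $M$ together with careful use of the addition formula for $\Delta$, and may need to be imported or adapted from \cite{ECFCIT}. The cardinality constraint $|A| < |M|$ in the definition of \good ness is tailored precisely so that every application of Lemma~\ref{V good} can be carried out inside $M$.
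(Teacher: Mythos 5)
Your proposal follows essentially the same route as the paper: build a chain of \good\ ELA-subfields starting from $A_0$ using Lemmas~\ref{ELA good} and~\ref{V good} with bookkeeping of the rotund, additively and multiplicatively free varieties, take unions, and conclude from the standard kernel that $Q(F)=\Q$ while $q$ is transcendental, so $q \notin Q(F)$. The one step you flag as possibly needing to be imported—preservation of the semistrong clause (2) under directed unions—is exactly what the paper dispatches by finite character: for a fixed finite tuple $\bbar$, both $\td(\bbar,\exp(\bbar)/\ker(M),A_i,\exp(A_i))$ and $\ldim_\Q(\bbar/\ker(M),A_i)$ are eventually constant along the chain, so $\Delta(\bbar/\bigcup_i A_i)\ge 0$ follows from goodness of the individual stages.
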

\begin{proof}
By Lemma~\ref{A_0 good} $A_0$ is \good, so applying Lemma~\ref{ELA good} there is a countable \good\ ELA-subfield, $F_1$ of $M$. Now enumerate all the rotund, additively and multiplicatively free subvarieties defined over $F_1$, and apply Lemma~\ref{V good} in turn for each and iterate, noting that the union of a chain of \good\ ELA-subfields of $M$ is still a \good\ ELA-subfield. At stage $\omega^2$ we get an ELA-subfield $F$ of $M$ which is strongly exponentially-algebraically closed. It satisfies the Schanuel property over the kernel, since every exponential subfield of $M$ does. Since $F$ is \good\ it has standard kernel. Hence $F \in \ECF$ and, by construction, $q \in F$.

Since $F$ has standard kernel, $Q(F) = \Q$. The element $q$ is transcendental, so is not in $Q(F)$.
\end{proof}

That completes the proof of Theorem~\ref{main theorem}.

\subsection*{Acknowledgements}
I would like to thank David Marker for asking me the question of whether $\Q$ is universally definable in $\B$ when I gave a talk at the Mathematical Sciences Research Institute in Berkeley, California, during the program on Model Theory, Arithmetic Geometry and Number Theory in Spring 2014, which was supported by the National Science Foundation under Grant No.~0932078~000. Most of the work was also done during that program. This work is also supported by EPSRC grant EP/L006375/1. Thanks are also due to the anonymous referee, who made several suggestions to improve the presentation and noticed a couple of places where I had oversimplified the argument.

\end{document}